\documentclass[a4paper]{amsart}
\usepackage[T1]{fontenc}
\usepackage[utf8]{inputenc}

\usepackage{graphicx}
\usepackage{tikz-cd}
\usepackage{tikz}
\usetikzlibrary{positioning}
\usepackage{lipsum}
\usepackage{epigraph}
\usepackage{amsmath}
\usepackage{braket}
\usepackage{amssymb}
\usepackage{amsfonts}
\usepackage{amsthm}
\usepackage{booktabs}
\usepackage{bbm}
\usepackage{mathtools}

\usepackage[thinc]{esdiff}
\usepackage{pigpen}
\usepackage{hyperref}
\usepackage[capitalise]{cleveref}
\newcommand{\po}{\ar@{}[dr]|{\text{\pigpenfont R}}}
\newcommand{\pb}{\ar@{}[dr]|{\text{\pigpenfont J}}}

\theoremstyle{plain}
\newtheorem{thm}{Theorem}

\newtheorem{prop}[thm]{Proposition}
\newtheorem{cor}[thm]{Corollary}

\newtheorem{lem}[thm]{Lemma}

\newtheorem*{claim*}{Claim}

\theoremstyle{definition}
\newtheorem{defn}[thm]{Definition}
\newtheorem{rmk}[thm]{Remark}

\AddToHook{env/defn/begin}{\crefalias{thm}{defn}}
\AddToHook{env/lem/begin}{\crefalias{thm}{lem}}
\AddToHook{env/cor/begin}{\crefalias{thm}{cor}}
\AddToHook{env/prop/begin}{\crefalias{thm}{prop}}
\newcommand{\R}{\mathbb{R}}

\newcommand{\PP}{\mathcal{P}}

\newcommand{\Z}{\mathbb{Z}}
\newcommand{\SSS}{\mathbb{S}}

\makeatletter
\newsavebox{\@brx}
\newcommand{\llangle}[1][]{\savebox{\@brx}{\(\m@th{#1\langle}\)}
\mathopen{\copy\@brx\kern-0.5\wd\@brx\usebox{\@brx}}}
\newcommand{\rrangle}[1][]{\savebox{\@brx}{\(\m@th{#1\rangle}\)}
\mathclose{\copy\@brx\kern-0.5\wd\@brx\usebox{\@brx}}}
\makeatother
\title{Quasi-convex surface subgroups in some one-relator groups with torsion}
\subjclass{20F65, 20F67, 57M07}
\author{Andrew Ng}
\date{}
\address{Mathematisches Institut, Universität Bonn, Endenicher Allee 60, 53115 Bonn, Germany}
\email{\href{mailto:clan@math.uni-bonn.de}{clan@math.uni-bonn.de}}
\begin{document}
\begin{abstract}
    We find surface subgroups in certain one-relator groups with torsion and use this to deduce a profinite criterion for a word in the free group to be primitive.

\end{abstract}
\maketitle
\section{Introduction}
A longstanding question often attributed to Gromov asks whether every one-ended hyperbolic group contains a subgroup isomorphic to the fundamental group of a closed hyperbolic surface, which from now on we will refer to as containing a surface subgroup. This has generated a lot of work on finding surface subgroups in various classes of hyperbolic groups. One of the most famous results in this vein is the existence of surface subgroups in fundamental groups of closed hyperbolic 3-manifolds \cite{KM12}. Recall that a hyperbolic group is said to be \emph{rigid} if it does not admit a nontrivial splitting with a virtually cyclic edge group. Another milestone in the theory is \cite[Corollary B]{W}, where Wilton proved that a one-ended hyperbolic group without $2$-torsion $\Gamma$ contains either a quasi-convex surface subgroup or a quasi-convex rigid subgroup, hence reducing the question of existence of surface subgroups for hyperbolic groups without 2-torsion to the question for one-ended rigid hyperbolic groups. We refer the reader to \cite{W} and the references therein for more background.

The aim of this note is to prove the following:
\begin{thm} \label{surfacesubgroup}
    Let $F_k$ be the non-abelian free group with free basis $\{x_1, \dots x_k\}$ and let $w$ be a word in the $x_i$ which is neither primitive nor a proper power. Then there exist positive integers $d$ and $N$ such that, for all $n\geq N$, the one-relator group $G_{dn}:=F_k/\llangle w^{dn}\rrangle$ contains a quasi-convex surface subgroup.
    
\end{thm}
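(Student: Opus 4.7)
The plan is to construct a closed surface mapping into the presentation complex of $G_n$ by combining an admissible surface in $F_k$ whose boundary wraps $w$ with the new relator disk coming from the torsion relation $w^n=1$, and then to verify injectivity and quasi-convexity of the resulting subgroup using a small-cancellation argument that only becomes effective when $n$ is large.

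To produce the boundary-surface, I would use that the non-primitivity of $w$ forces its primitivity rank in $F_k$ to be finite; the negative-immersions / primitivity-rank framework of Louder--Wilton (or alternatively the bound $\mathrm{scl}(w)\geq 1/2$ of Duncan--Howie, together with an extremal admissible surface for stable commutator length) supplies a $\pi_1$-injective immersion $f:\Sigma\to R_k$ of a compact orientable hyperbolic surface $\Sigma$ with non-empty boundary into the rose $R_k=K(F_k,1)$, with each boundary component wrapping $w$ some positive number of times $k_1,\ldots,k_m$. Set $d=\mathrm{lcm}(k_1,\ldots,k_m)$. For $n$ a multiple of $d$, a standard covering-space argument (using a suitable homomorphism $\pi_1(\Sigma)\to\Z/n$) produces a finite cover $\widetilde{\Sigma}\to\Sigma$ in which every boundary circle wraps $w$ a multiple of $n$ times.

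Letting $X_n$ be the presentation $2$-complex of $G_n$, each boundary loop of $\widetilde{\Sigma}$ then becomes null-homotopic in $X_n$, so I can cap it off with a copy of the relator $2$-cell to obtain a closed orientable surface $S$ together with a map $\phi:S\to X_n$ and a homomorphism $\phi_\ast:\pi_1(S)\to G_n$.

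The heart of the argument, and what I expect to be the main obstacle, is to show that $\phi_\ast$ is injective with quasi-convex image once $n$ is sufficiently large. I would view the quotient $F_k\to G_n$ as a rotating family along the axis of $w$ in the Cayley graph of $F_k$ (Dahmani--Guirardel--Osin, Coulon) whose rotation angle grows with $n$, so that $G_n$ is hyperbolic for large $n$ (recovering Newman) and satisfies a geometric small-cancellation condition with arbitrarily good constants. In that setting any reduced disk diagram in $S$ that is killed in $G_n$ must contain a large piece of an added relator $2$-cell, which can be ruled out via a Greendlinger-type lemma using the injectivity of $\widetilde{\Sigma}\to R_k$ already established in $F_k$. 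The same analysis should yield quasi-convexity of $\phi_\ast(\pi_1(S))$ in $G_n$, with all the quantitative ``$n$ large'' input entering through the injectivity radius of the rotating family.
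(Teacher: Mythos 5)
Your high-level skeleton (an essential surface with boundary wrapping powers of $w$, arrange divisibility of the boundary degrees, cap off, and prove injectivity for large $n$ by a ``small cancellation at infinity'' mechanism) matches the paper's, which uses Wilton's essential surfaces in graph pairs and the Agol--Groves--Manning Dehn filling theorem in place of your rotating families. But there are two genuine gaps.

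The serious one is that you never deal with \emph{accidental parabolics}, and this is exactly where your injectivity argument breaks. Suppose $\gamma$ is a non-peripheral closed curve on $\Sigma$ (hence on $\widetilde\Sigma$ and on the capped surface $S$) whose image in $F_k$ is conjugate to a power of $w$. Such curves genuinely occur for the surfaces produced by any of the cited existence results. In $G_n$ the image of $\gamma$ has finite order, while $\gamma$ has infinite order in $\pi_1(S)$, so $\phi_\ast$ cannot be injective no matter how large $n$ is; equivalently, your Greendlinger-type analysis will find reduced diagrams whose long relator-pieces run along $\gamma$ rather than along $\partial\widetilde\Sigma$, and these cannot be ruled out. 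Every version of the ``large $n$'' machinery (AGM's $H$-fillings, DGO/Coulon rotating families) requires as a hypothesis that the subgroup meets conjugates of $\langle w\rangle$ only peripherally. The missing idea is a surgery step: there are only finitely many conjugacy classes of such $\gamma$ (they are elevations of $w$, by Stallings-style covering space arguments), one passes to a finite cover of $\Sigma$ in which they are all embedded (Scott), cuts along a maximal disjoint collection of them, and takes a suitable component of the cut surface; only then does one fill.

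The second, smaller, issue is your source for the initial surface. An admissible surface in the scl sense, with every boundary component wrapping $w$ a \emph{positive} number of times, exists only when $w\in[F_k,F_k]$ (the boundary classes sum to zero in $H_1$), so the Duncan--Howie/extremal-surface route fails for imprimitive words such as $x_1^2$ that are nontrivial in homology. The statement you actually need is Wilton's Theorem F (essential surfaces in graph pairs), which produces a $\pi_1$-injective map of pairs $(\pi_1(\Sigma),\pi_1(\partial\Sigma))\to(F_k,\langle w\rangle)$ for any imprimitive $w$, with boundary degrees of arbitrary sign. Finally, note that quasi-convexity need not be extracted from your diagram analysis: for $n\ge|w|$ the groups $G_n$ are locally quasi-convex by Hruska--Wise, so any finitely generated subgroup is automatically quasi-convex.
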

Recall that a word $w$ in $F_k$ is said to be \emph{primitive} if $F_k$ splits as a free product $\langle w \rangle *F_{k-1}$. The assumption that $w$ isn't primitive can't be removed since when $w$ is primitive the corresponding one-relator groups are virtually free. 

There has also been substantial interest in characterising when a word is primitive. Whitehead famously gave an algorithm that can be used to decide whether a given word is primitive \cite[Ch.I.4]{LS77}. Later work of Puder and Parzanchevski showed that an element $w$ of $F_k$ which is primitive in $\widehat{F_k}$, the profinite completion of $F_k$, is already primitive in $F_k$ \cite{PP15}. This was later reproved and generalised by Wilton \cite[Corollary E]{W}. Using \cref{surfacesubgroup} we can give another profinite characterisation of when a word is primitive:
\begin{cor} \label{primitivity}
    Let $F_k$ be the non-abelian free group with free basis $\{x_1, \dots x_k\}$ and let $w$ be a word in the $x_i$ which is not a proper power. The following are equivalent:
    \begin{enumerate}
        \item $w$ is primitive;
        \item For all sufficiently large $n$ the one-relator group $G_n:=F_k/\llangle w^n \rrangle$ has the same profinite completion as the group $K_n:=F_{k-1}*\Z/n$.
    \end{enumerate}
\end{cor}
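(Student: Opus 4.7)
$(1)\Rightarrow(2)$ is essentially tautological: if $w$ is primitive then $F_k\cong\langle w\rangle*F_{k-1}$, so $G_n\cong(\Z/n)*F_{k-1}=K_n$ abstractly for every $n\geq 1$, whence their profinite completions agree. For $(2)\Rightarrow(1)$ I would argue the contrapositive, combining \cref{surfacesubgroup} with the profinite and cohomological consequences of Wise's cubulation machinery applied to one-relator groups with torsion.

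Assume $w$ is not primitive. By \cref{surfacesubgroup} fix a sufficiently large $n$ (in the relevant arithmetic progression) such that $G_n$ contains a quasi-convex surface subgroup $S$, and suppose for contradiction that there is an isomorphism $\phi:\widehat{G_n}\xrightarrow{\sim}\widehat{K_n}$. The group $K_n=F_{k-1}*\Z/n$ is virtually free: its Bass--Serre tree has trivial edge stabilizers and either free or finite vertex stabilizers, so any torsion-free finite-index normal subgroup $K'_n\leq K_n$ is a finitely generated free group. Via the standard bijection between open subgroups of the profinite completion of a residually finite group and its finite-index subgroups, the open subgroup $\phi^{-1}(\widehat{K'_n})\leq\widehat{G_n}$ is the profinite completion $\widehat{G'_n}$ of some finite-index subgroup $G'_n\leq G_n$, and $\widehat{G'_n}\cong\widehat{K'_n}$ is therefore a free profinite group of finite rank.

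For the contradiction I use two external inputs about $G_n$: by Wise's quasi-convex hierarchy theorem combined with Agol's theorem, $G_n$ is virtually compact special; and by Haglund--Wise, quasi-convex subgroups of virtually compact special groups are virtual retracts. Shrinking $G'_n$ if necessary (while preserving that $\widehat{G'_n}$ is free profinite, since open subgroups of free profinite groups are free profinite), I may assume $G'_n$ itself is compact special and admits a retraction onto $S\cap G'_n$, which is a finite-index subgroup of $S$ and hence again a closed surface group of genus $\geq 2$. Compact special groups are torsion-free and cohomologically good in Serre's sense (they are virtual retracts of right-angled Artin groups, and goodness passes through retracts from the known goodness of RAAGs). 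Hence $H^2(G'_n,\F_p)=H^2_{\mathrm{cts}}(\widehat{G'_n},\F_p)=0$. But the retraction $G'_n\twoheadrightarrow S\cap G'_n$ induces a split injection $H^2(S\cap G'_n,\F_p)\hookrightarrow H^2(G'_n,\F_p)$, and the left-hand side is nonzero since $S\cap G'_n$ is a closed surface group. This is the desired contradiction. The main obstacle is marshaling these external inputs -- cubulation, virtual specialness, virtual retraction of quasi-convex subgroups, and cohomological goodness of compact special groups -- which are by now standard but substantial; once they are acknowledged, the remainder is routine profinite and cohomological bookkeeping.
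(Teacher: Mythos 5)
Your proposal is correct and follows essentially the same route as the paper: apply \cref{surfacesubgroup} to obtain a quasi-convex surface subgroup, use cubulation (Lauer--Wise) plus Agol's theorem to get virtual specialness, invoke the Haglund--Wise virtual retract property to produce a finite-index subgroup with non-vanishing second cohomology, and use cohomological goodness to transfer this to the profinite completion, contradicting the fact that $\widehat{K_n}$ is virtually free profinite. Your write-up is somewhat more careful than the paper's about the profinite bookkeeping (passing to a torsion-free finite-index subgroup of $K_n$ and matching open subgroups), but the underlying argument is identical.
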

Recall that Remeslennikov's famous question \cite[Question 5.48]{KM14} asks whether all finitely generated residually finite groups that have the same profinite completion as a free group are in fact free. A positive answer to Remeslennikov's question would imply the stronger result that $w$ is primitive if and only if, for some $n$, $K_n$ and the one-relator group $G_n$ defined as in \cref{primitivity} have the same profinite completion. 

Indeed, suppose that for some $n$ the one-relator group $G_n:=F_k/\llangle w^n \rrangle$ has the same profinite completion as the group $K_n:=F_{k-1}*\Z/n$. Then $\widehat{G_n}$ is virtually a free profinite group, which would imply by Remeslennikov that $G_n$ is virtually free. 
We conclude that $w$ is primitive by \cite[Proposition II.5.13]{LS77}.

In the free group $F_{2k}$ we say that a word $w$ is a \emph{surface word} if $F/\llangle w \rrangle$ is isomorphic to the fundamental group of a closed non-positively curved surface. We can similarly recognise surface words:
\begin{cor} \label{surfaceword}
    A word $w$ which is not a proper power is a surface word in $F_{2k}$ if and only if, for all sufficiently large $n$, the one-relator group $G_n:=F_{2k}/\llangle w^n \rrangle$ is virtually a surface group.
\end{cor}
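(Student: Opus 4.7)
The plan is to identify $G_n$ with the orbifold fundamental group $\pi_1^{\mathrm{orb}}(\mathcal{O}_n)$ of a closed $2$-orbifold with a single cone point of order $n$, and exploit this identification in both directions.

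For the forward direction, given that $G_1 := F_{2k}/\langle\langle w\rangle\rangle \cong \pi_1(\Sigma)$ is a closed non-positively curved surface group, I would appeal to Zieschang's classification of surface-group presentations to conclude that $w$ lies in the $\mathrm{Aut}(F_{2k})$-orbit of the standard surface relator $r$ up to conjugation and inversion. This identifies $G_n \cong F_{2k}/\langle\langle r^n\rangle\rangle \cong \pi_1^{\mathrm{orb}}(\Sigma(n))$, where $\Sigma(n)$ is $\Sigma$ with a single cone point of order $n$ inserted. For $n \geq 2$ this orbifold has negative orbifold Euler characteristic and is therefore hyperbolic, so $G_n$ embeds as a cocompact lattice in $\mathrm{PSL}_2(\R)$; Selberg's lemma then produces a torsion-free finite-index subgroup, which is the fundamental group of a closed surface cover of $\Sigma(n)$.

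For the reverse direction, assume $G_n$ is virtually a surface group for all sufficiently large $n$; after reducing to the case $w$ is not a proper power (by passing to its primitive root), Newman's spelling theorem becomes available, and I fix a large $n \geq 3$. The first step would be to upgrade the hypothesis to the structural statement that $G_n$ is itself a cocompact Fuchsian group: any finite-index closed surface subgroup $S \leq G_n$ has negative Euler characteristic (since $\chi(G_n) = 1 - 2k + 1/n < 0$ for $n$ large), hence is hyperbolic, and by Kerckhoff's solution to the Nielsen realization problem the induced $G_n/S$-action on $\HH^2/S$ is realized by isometries for some hyperbolic metric; $G_n$ then extends to a proper isometric action on $\HH^2$, giving $G_n = \pi_1^{\mathrm{orb}}(\mathcal{O}_n)$ for a closed hyperbolic $2$-orbifold $\mathcal{O}_n$. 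Next I would use Newman's spelling theorem to constrain the singularities: all torsion in $G_n$ is conjugate to powers of $w$, which has order exactly $n$, so every finite subgroup is cyclic of order dividing $n$, and there is a unique conjugacy class of maximal finite cyclic subgroups (of order $n$). This rules out dihedral-type singularities, so $\mathcal{O}_n$ is a closed surface $\Sigma''$ carrying a single cone point of order $n$, and the identity $\chi(\Sigma'') = \chi_{\mathrm{orb}}(\mathcal{O}_n) + (1 - 1/n) = 2 - 2k$ makes $\Sigma''$ non-positively curved.

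Finally, the map $\pi_1^{\mathrm{orb}}(\mathcal{O}_n) \twoheadrightarrow \pi_1(\Sigma'')$ forgetting the orbifold structure has kernel the normal closure of the cone-point generator $c$; Newman's theorem forces $c$ to be conjugate in $G_n$ to $w^j$ for some $j$ coprime to $n$, so $\langle\langle c\rangle\rangle_{G_n} = \langle\langle w\rangle\rangle_{G_n}$, and
\[
\pi_1(\Sigma'') \;=\; G_n/\langle\langle w\rangle\rangle_{G_n} \;=\; F_{2k}/\langle\langle w\rangle\rangle \;=\; G_1,
\]
exhibiting $w$ as a surface word. The hard part will be the step upgrading ``$G_n$ is virtually a surface group'' to ``$G_n$ is itself the fundamental group of a closed $2$-orbifold'' via Nielsen realization; once the orbifold interpretation is established, Newman's spelling theorem essentially pins down the singular locus and the final computation is immediate.
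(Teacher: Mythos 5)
Your forward direction is essentially the paper's: both identify $G_n$ with the orbifold fundamental group of the closed surface with a single cone point of order $n$ and observe that this orbifold is good with negative Euler characteristic, hence virtually a closed surface. (You make explicit, via Zieschang, the identification of $w$ with the standard relator, which the paper leaves implicit.) Your reverse direction, however, is a genuinely different argument. The paper proceeds by contrapositive: if $w$ is imprimitive and not a surface word, \cref{surfacesubgroup} together with specialness of $G_n$ produces a finite-index subgroup retracting onto an \emph{infinite-index} surface subgroup with infinite kernel, and one then quotes the argument of \cite[Theorem 2]{Wil20} to rule out that finite-index subgroup being a surface group. You instead argue directly: Nielsen realization upgrades ``virtually a closed hyperbolic surface group'' to ``cocompact Fuchsian'', the torsion theorem for one-relator groups with torsion pins the singular locus down to a single cone point of order $n$ carried by a conjugate of $\langle w\rangle$, and killing the cone point recovers $F_{2k}/\langle\langle w\rangle\rangle$ as a closed surface group. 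This is attractive: it bypasses \cref{surfacesubgroup}, cubulation and virtual retractions entirely, and yields the stronger structural conclusion that $G_n$ is itself a closed $2$-orbifold group; the price is reliance on Kerckhoff's theorem and on the structure theory of torsion in one-relator groups.

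Two points need attention. First, the Nielsen realization step is not automatic: to realize the extension $1\to S\to G_n\to Q\to 1$ isometrically you need $Q\to\mathrm{Out}(S)$ to be injective (equivalently, no nontrivial finite-order element of $G_n$ centralizes $S$), which here follows from the fact that centralizers of nontrivial torsion elements in a one-relator group with torsion are finite cyclic; you should also rule out reflector curves in $\mathcal{O}_n$ (a reflection in a cocompact NEC group has infinite centralizer), not merely dihedral cone behaviour. Both gaps are fillable from the same centralizer facts, but they are the substance of ``the hard part'' you defer. Second, the ``reduction to $w$ not a proper power'' is not a reduction: if $w=u^m$ with $m\ge 2$ and $u$ a surface word, then $G_n=F_{2k}/\langle\langle u^{mn}\rangle\rangle$ is virtually a surface group for every $n$, yet $w$ is not a surface word because $F_{2k}/\langle\langle w\rangle\rangle$ has torsion. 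So this case cannot be passed to the primitive root; it has to be excluded by hypothesis. (The paper's own proof also tacitly treats $w$ as not a proper power, so this is as much a caveat about the statement as about your argument, but as written your first step is where it surfaces.)
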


The strategy to prove \cref{surfacesubgroup} will be to apply a theorem of Wilton \cite[Theorem F]{W} that gives a suitable map of pairs from a surface with boundary to the free group such that the boundary is mapped to the word $w$ that we are interested in, modify the surface so that there are no accidental parabolics, and then use a result of Agol--Groves--Manning (\cref{noaccidents} below) to show that the fundamental group of the coned off orbifold injects into the one-relator group $G_n$ when $n$ is sufficiently large. We will recall below the relevant notions.

\section{Background}
\subsection{Relatively hyperbolic Dehn filling} \label{dehnfilling}
In this section we recall some facts from the theory of Dehn filling that we will need. We refer the reader to \cite{B} and \cite{AGM} and the references therein for background on relatively hyperbolic groups and Dehn filling.

    Let $G$ be a group which is hyperbolic relative to a finite collection of subgroups $\PP=\{P_1,\ldots,P_n\}$. Recall that a \emph{filling} of $G$ is a choice of subgroups $N_j \unlhd P_j$, called filling kernels. The quotient by the normal subgroup generated by the $N_j$ is denoted by $G(N_1,\dots N_m)$. 

\begin{defn}
    Let $H$ and $G$ be relatively hyperbolic groups relative to $\mathcal{Q}=\{Q_1, \ldots ,Q_m\}$ and $\PP=\{P_1,\ldots,P_n\}$ respectively. 
    We say that an element $g \in G$ (resp. $h \in H$) is a \emph{parabolic element} if it is contained in a parabolic subgroup of $G$ (resp. $H$).
    A homomorphism $\phi\colon H \to G$ is said to \emph{respect the peripheral structure (on $H$)} if, for every $i$, $\phi(Q_i)$ is conjugate in $G$ into some $P_j \in \PP$. Equivalently, the image of a parabolic element of $H$ is a parabolic element of $G$.
    An element $h \in H$ is said to be an \emph{accidental parabolic} element if $\phi(h)$ is parabolic in $G$ while $h$ is not parabolic in $H$.
    Building on this, we say that the map $\phi$ has \emph{no accidental parabolics} if it respects the peripheral structure and there are no accidental parabolic elements in $H$.
\end{defn}
\begin{defn}
    Suppose $G$ is a relatively hyperbolic group, relative to $\mathcal{P}$, and that $H < G$ is hyperbolic relative to $\mathcal{Q}$ and that the inclusion of $H$ into $G$ respects the peripheral structure. A filling $\varphi \colon G \to G(N_1,\dots N_m)$ is an \textit{$H$-filling} if, whenever $\varphi^{-1}(\varphi(H) \cap P_i^g$) is nontrivial, $N_i^ g \subseteq  sQ_j s^{-1} \subseteq H$ for some $s\in H$ and $Q_j \in \mathcal{Q}$.

\end{defn}

The main result we need from the theory of Dehn fillings is
\begin{prop} \cite[Proposition 4.4]{AGM} \label{noaccidents}
    Let $H<G$ be a relatively quasi-convex subgroup. For any sufficiently large $H$-filling $G(N_1,\dots N_m)$ of $G$, the induced map from the induced filling $H(K_1, \dots K_n)$ into $G(N_1,\dots N_m)$ is injective.
\end{prop}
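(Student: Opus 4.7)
The plan is to prove the contrapositive: take an element $h \in H(K_1,\dots,K_n)$ whose image in $G(N_1,\dots,N_m)$ is trivial, and show that $h$ must already be trivial in $H(K_1,\dots,K_n)$, provided the filling is sufficiently long.

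First, I would apply the relatively hyperbolic Dehn filling theorem of Osin and Groves--Manning separately to $G$ and to $H$. For sufficiently long fillings, both quotient groups are relatively hyperbolic with respect to the quotient peripheral structures $\{\bar{P}_j\}$ and $\{\bar{D}_i\}$, the peripheral subgroups inject (modulo the filling kernels), and the corresponding cusped Cayley graphs remain $\delta$-hyperbolic with controlled constants independent of the length of the filling. The $H$-filling hypothesis ensures that the filling data on $G$ restricts compatibly to the induced filling on $H$: whenever a peripheral of $H$ sits inside a conjugate of some $P_j$, the kernel $N_j$ meets $H$ exactly inside a peripheral of $H$ in the prescribed way.

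The main step is a van Kampen diagram argument in the cusped space. Relative quasi-convexity of $H$ in $G$ yields a quasi-isometric embedding of the cusped Cayley graph of $H$ into that of $G$. Given a diagram witnessing triviality of a word $w$ representing $h$ in $G(N_1,\dots,N_m)$, I would use the hyperbolicity of the filled cusped space together with the quasi-convex embedding to push the diagram into a controlled neighbourhood of the image of $H$, and then rewrite it as a diagram over the filled presentation of $H$. The crucial point is that any $2$-cell coming from a filling relator in $N_j^g$ whose boundary lies near $H$ can, via the $H$-filling condition, be replaced by a $2$-cell labelled by a relator from some $K_i^s$ with $s \in H$, so that the relators needed actually live in the presentation of $H(K_1,\dots,K_n)$.

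The main obstacle is making the geometric ``pushing'' argument precise: one must control how horoballs and filling disks sitting at parabolic cosets of $G$ that are \emph{not} parabolic cosets of $H$ interact with the quasi-convex hull of $H$, and ensure these contributions can be eliminated. This is exactly where the $H$-filling hypothesis does essential work, guaranteeing that the only filling relators meeting $H$ nontrivially are conjugates, inside $H$, of the relators defining the induced filling of $H$. Once this is set up, taking the filling sufficiently long makes the hyperbolicity constants uniform enough for the reduction to terminate, producing a van Kampen diagram for $w$ over the presentation of $H(K_1,\dots,K_n)$ and hence injectivity.
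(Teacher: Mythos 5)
There is nothing in the paper to compare your attempt against: \cref{noaccidents} is quoted verbatim from Agol--Groves--Manning \cite[Proposition 4.4]{AGM} and the paper supplies no proof of it, treating it as an external input. So the only meaningful question is whether your sketch would reconstruct the AGM argument, and here I think you are pointing in the right general direction --- cusped spaces, the relatively hyperbolic Dehn filling theorem applied to both $G$ and $H$, relative quasi-convexity giving a controlled embedding of the cusped space of $H$, and the $H$-filling condition ensuring that filling relators meeting $H$ are already relators of the induced filling --- but what you have written is a plan rather than a proof, and the plan leaves the decisive step open.

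Concretely: the entire content of the proposition is the inclusion $\ker(G \to \bar{G}) \cap H \subseteq \ker(H \to \bar{H})$ (the reverse inclusion is immediate from the $H$-filling hypothesis). Your ``main step'' --- pushing a van Kampen diagram for a word trivial in $G(N_1,\dots,N_m)$ into a neighbourhood of $H$ and rewriting it over the presentation of $H(K_1,\dots,K_n)$ --- is exactly this inclusion, and you explicitly flag it as the ``main obstacle'' without resolving it. In particular you do not explain how to handle conjugates $N_j^g$ with $g \notin H$ whose associated horoballs or filling cells nonetheless approach the quasi-convex hull of $H$; saying the $H$-filling hypothesis ``does essential work'' here is naming the difficulty, not overcoming it. The actual argument requires quantitative control (uniform hyperbolicity constants for all sufficiently long fillings, in the sense of Osin and Groves--Manning, together with persistence of relative quasi-convexity with uniform constants) that your sketch gestures at but does not establish. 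As it stands the proposal restates the theorem's difficulty in geometric language rather than proving it, so I would count this as a genuine gap --- though, to be fair, the same gap is present in the paper itself, which simply cites \cite{AGM}.
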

\subsection{Orbifolds}
Orbifolds are a generalisation of manifolds where each point has a neighbourhood which is homeomorphic to an open set in the quotient of Euclidean space by the action of a finite group. A connected orbifold has a notion of dimension, which is the dimension $n$ of the Euclidean space as before. An orbifold of dimesion $n$ will be abbreviated as an $n$-orbifold. Orbifolds arise naturally as quotients of non-free group actions on manifolds. We refer the reader to \cite[Section 2]{sco} for a more detailed discussion and recall here the aspects of the theory that we need.

There is a notion of orbifold fundamental group and a bijection between orbifold coverings and subgroups of the orbifold fundamental group, akin to the Galois correspondence between topological covering spaces and subgroups of the fundamental group. It is therefore natural to ask when orbifolds have finite orbifold covers which are a manifold. 

\begin{defn}
    An orbifold is said to be bad if it has no finite cover which is a manifold. Orbifolds which aren't bad are said to be good. 
\end{defn}

In this article we will only be concerned with 2-orbifolds. Bad 2-orbifolds without boundary have been classified \cite[Theorem 2.3]{sco}. We state an immediate corollary of this classification that we will use.
\begin{prop} \label{goodorb}
    A 2-orbifold with at least three cone points or positive genus is good.
\end{prop}
We briefly recall what a cone point is. For $q>1$, the cyclic group $\Z/q\Z$ acts on $\R^2$ by rotation by angle $\frac{2\pi}{q}$. The image of the origin in the quotient space is the only point which is not a manifold point and is said to be a cone point.

More generally, an orbifold point $x$ is said to be a cone point if, for some $q>1$, $x$ has a neighbourhood isomorphic to the cone point of $\R^2/(\Z/q\Z)$.

\subsection{Relatively hyperbolic structures on hyperbolic groups} \label{subsec:relhypfree}

In order to apply the Dehn filling machinery we will view free groups as hyperbolic relative to certain subgroups.
    In \cite{B} it was shown that a hyperbolic group is hyperbolic relative to almost malnormal and quasi-convex subgroups. Since free groups are locally quasi-convex and maximal cyclic subgroups of a non-abelian free group are malnormal, the free group is hyperbolic relative to the subgroup $\langle w\rangle$, which from now on we take as the only parabolic of $F_k$. Since $F_k$ is locally quasi-convex, \cite[Theorem 1.1]{MP12} implies that any finitely generated subgroup of $F_k$ is relatively quasi-convex for this relatively hyperbolic structure.

Free groups also arise as the fundamental groups of compact surfaces with boundary. The inclusion of each boundary component induces an injection at the level of fundamental groups, and in general the fundamental group of a complete, finite volume hyperbolic manifold is relatively hyperbolic with parabolic subgroups the images of the boundary components \cite{Szc}.

These are related by the next theorem, which is a rephrasing of \cite[Theorem F]{W} that is adapted to the current setting. 

\begin{thm} \cite[Theorem F]{W} \label{wilton}
    Let $F_k$ be a free group and fix $w$ an imprimitive word which is not a proper power, so that $(F_k, \langle w \rangle)$ is a relatively hyperbolic structure on $F_k$. Represent this by a rose $R_k$ and denote by $S_w$ the loop representing $w$. Then there is a compact, connected, hyperbolic surface with boundary $\Sigma$ and a map of pairs of spaces $(\Sigma, \partial \Sigma) \to (R_k, S_w)$ inducing an injective map $\pi_1(\Sigma) \to F_k$ that respects the peripheral structure (when we view $\pi_1(\Sigma)$ as hyperbolic relative to its boundary components).
\end{thm}
    
\section{Proof of \cref{surfacesubgroup}} \label{surfaceproof}
First recall the following notion:
\begin{defn} \cite{AF}
    Let $F$ be a finitely generated free group, let $1\ne w \in F$ and let $H\le F$. An \emph{elevation} of $[w]_F$ to $H$ is a conjugacy class $[u]_H$ of an element $u\in H$ satisfying the following conditions:
    \begin{enumerate}
        \item $u=w_0^d$ for some $d\ge1$ and some representative $w_0\in [w]_F$.
        \item $d$ is the smallest integer for which $w_0^d\in H$.
    \end{enumerate}
    In that case, $d$ is called the \emph{degree} of the elevation $[u]_H$, denoted by $d=\deg_{[w]_F}[u]_H$ (or simply $d=\deg_{[w]}[u]$ when it is clear who $F$ and $H$ are).
\end{defn}

For the proof of \cref{surfacesubgroup} we will need a modified version of \cref{wilton}.
\begin{lem} \label{modified}
  Let $F_k$ be a free group and fix $w$ an imprimitive word which is not a proper power, so that $(F_k, \langle w \rangle)$ is a relatively hyperbolic structure on $F_k$. Then there is a compact, connected, hyperbolic surface with boundary $\Sigma'$ and an injective map $\phi: \pi_1(\Sigma') \to F_k$ that respects the peripheral structure (when we view $\pi_1(\Sigma)$ as hyperbolic relative to its boundary components) and has no accidental parabolics.
\end{lem}

To prove this, we will need the following intermediate result. 
In the setting of \cref{wilton}, let $\langle h \rangle \subset \pi_1(\Sigma)$ be a maximal cyclic subgroup such that $h$ is an accidental parabolic. We say that $h$ is a maximal accidental parabolic element.
\begin{lem} \label{pullback}
In the setting of \cref{wilton}, there are only finitely many maximal accidental parabolic elements.
\end{lem}
\begin{proof}

By \cite[Theorem 5.5]{S}, a maximal accidental parabolic element $g$ must belong to a conjugacy class given by an elevation of $w$ to $\pi_1(\Sigma)$. Denote by $S$ the result of applying Stallings' folding (see \cite[Section 3]{S}) to the 1-skeleton of $\Sigma$. Each maximal accidental parabolic is represented by a loop in the pullback graph $S \times_{R_k} S^1$ in the following pullback diagram:
\begin{center}
\begin{tikzcd}
S \times_{R_k} S^1 \arrow[r, "\hat{\iota}"] \arrow[d, "\hat{d}"] & S^1 \arrow[d, "d"] \\
S \arrow[r, "\iota"] & R_k
\end{tikzcd}
\end{center}
To conclude, note that setting $S_2$ to be the subgroup $\langle w \rangle \subset \pi_1(R_k)$ and $S_1$ to be $\pi_1(S)$ (which is finitely generated) in \cite[5.7(b)]{S}, we obtain that there are only finitely many elevations of $w$ to $\pi_1(S)$.
\end{proof}

\begin{proof}[Proof of \cref{modified}]
    
We will modify the surface given by \cref{wilton} to find a different hyperbolic surface without accidental parabolics. 

Suppose that $\langle g\rangle$ is a cyclic subgroup of $\pi_1(\Sigma)$ which isn't conjugate into the boundary but whose image under $f$ is conjugate in $F$ to a parabolic, i.e. a subgroup of a conjugate of $\langle w\rangle$. 

 There are only finitely many possible conjugacy classes of maximal accidental parabolic elements $g$ by \cref{pullback}. Choose representatives $g_1, g_2, \dots, g_n$ for these conjugacy classes. Note that if $h$ gives rise to an accidental parabolic then it will be a power of some elevation of $w$.

By results of \cite{Sco78}, there exists a finite cover $\hat{\Sigma}$ of $\Sigma$ where each conjugacy class is represented by an embedded curve. Up to passing to a further finite cover, we may take the cover to be normal, so all elevations of every conjugacy class are embedded curves in $\hat{\Sigma}$. Among all possible subsets of curves representing the elevations of the $g_i$ such that the images of any two curves are disjoint up to isotopy, choose some subset which is maximal under inclusion and cut along the curves in this subset. Since the original surface was hyperbolic, it couldn't have been an annulus, and in particular there is a connected component $C$ of the cut surface which is either a surface with positive genus or a sphere with at least three boundary components. We will take $\Sigma'$ to be $C$. 

It remains to explain why, when $C$ is given a relatively hyperbolic structure with its boundary components as parabolics, the map $f\colon \pi_1(C) \to F_k$ does not contain any accidental parabolics. This is because any accidental parabolic element $h \in \pi_1(C)$ which is not a proper power would have to be embedded in $\Sigma'$, and if it remained in $C$ as a curve which is not boundary parallel this would violate the maximality condition.
\end{proof}
We have now assembled all the ingredients we need.
\begin{proof} [Proof of \cref{surfacesubgroup}]
    Take the surface $\Sigma'$ from \cref{modified}. Let $d$ be the least common multiple of the degrees of the boundary components of $\Sigma'$. When $n$ is a multiple of $d$, the Dehn filling given by the filling kernel $\langle w^n \rangle$ is an $H$-filling. When $n$ is sufficiently large, \cref{noaccidents} implies that the fundamental group $H$ of the hyperbolic orbifold $O$ obtained by coning off the boundary components of $\Sigma'$ by a disc with cone point $\Z/n$ will inject in the corresponding Dehn filling of $F$, i.e. $G_n=F_k/\llangle w^n\rrangle$. 
    
   Since $\Sigma'$ has either $b\geq 3$ boundary components or genus $g>0$, $O$ is not on the list of bad orbifolds by \cref{goodorb}, hence it is finitely covered by a surface. Following section 2 of \cite{sco} we compute that the orbifold Euler characteristic is $\chi(O) = 2-2g-b(1-\frac{1}{n})$, which is negative when $n>3$. Since $O$ is good and Euler characteristic is multiplicative under finite covers, $O$ has a finite cover which is a closed surface of negative Euler characteristic, hence is hyperbolic. The fundamental group of this surface is the desired surface subgroup.

    Since it was proved in \cite{HW} that $G_n$ is locally quasi-convex when $n\geq |w|$, for sufficiently large $n$ the surface subgroup is quasi-convex. 
\end{proof}

\begin{rmk}
    Since all one-relator groups with torsion are hyperbolic by Newman's spelling theorem \cite{N68} it would be interesting to find surface subgroups in all of them, or at least with sufficiently large torsion. However, it seems difficult to control the degree of the maps on the boundary components of the surface from \cref{wilton}, and in general these will almost certainly have degree $>1$, so a different strategy will probably be necessary. By recent work of Kielak--Linton \cite{KL}, all one-relator groups with torsion are virtually free-by-cyclic, but it appears unclear how to leverage this fact; to the best of the author's knowledge it is unknown which free-by-cyclic groups contain surface subgroups. The most promising result in this direction seems to be the recent breakthrough of Wilton \cite{Wil26}, who showed that all fundamental groups of compact special cube complexes which are word hyperbolic but not free or surface groups contain one-ended subgroups of infinite index (c.f. the proof of \cref{primitivity} below).
\end{rmk}
\section{Proofs of the corollaries} \label{corproof}
\begin{proof} [Proof of \cref{primitivity}]
    One-relator groups with torsion are hyperbolic and cubulated when $n\geq 4$ \cite{LW13}, hence virtually compact special \cite{Agol} in the sense of \cite{HW08} and therefore cohomologically separable (also known as 'good in the sense of Serre') \cite[Proposition 3.2]{WZ}. If $w$ isn't primitive, for large enough $n$ there exists a quasi-convex surface subgroup $H$ in the corresponding one-relator group $G_n$ by \cref{surfacesubgroup}. Since quasi-convex subgroups of special groups are virtual retracts \cite[Corollary 7.9]{HW08}, there exists a finite index subgroup $G'$ of $G_n$ which retracts onto $H$. This implies that $G'$ has non-vanishing cohomology in degree 2 (with $\Z/2\Z$ coefficients), and since $G'$ is cohomologically separable its profinite completion $\widehat{G'}$ does too. This implies $\widehat{G} \ncong \widehat{K_n}$ since $\widehat{K_n}$ is virtually a free profinite group. 
\end{proof}

\begin{proof} [Proof of \cref{surfaceword}]
    Suppose first that $w$ is a surface word. Consider the orbifold $O$ obtained by taking the corresponding surface of genus $k$ with one boundary component and gluing in a disc with a cone point of order $n$. This orbifold is good by \cref{goodorb}, so is finitely covered by a surface. By applying the Seifert--van Kampen theorem for orbifolds \cite[Theorem 4.7.1]{Ch12}, we compute that $G_n$ is the orbifold fundamental group of $O$, so $G_n$ is virtually a surface group. 
    
    Now suppose $w$ isn't a surface word. If $w$ is primitive, then $G_n$ is virtually free, hence not virtually a surface. Otherwise, suppose $w$ is neither primitive nor a surface word. Then the surface subgroup given by \cref{wilton} is of infinite index. As in the proof of \cref{primitivity}, there is a finite index subgroup $G'$ of $G_n$ and a retraction $r\colon G' \to \pi_1(\Sigma')$ onto a surface subgroup of infinite index. The kernel is infinite, but one-relator groups are virtually torsion-free \cite{FKS72}, so the kernel contains an infinite cyclic subgroup. We now conclude verbatim as in the proof of \cite[Theorem 2]{Wil20}.
\end{proof}
\subsection*{Acknowledgements} The author is greatly indebted to Henry Wilton for suggesting this project and for countless helpful comments on earlier drafts which corrected inaccuracies and substantially improved the quality of exposition. Thanks are also due to Jonathan Fruchter for suggesting \cref{surfaceword}, his infectious enthusiasm and interest, and suggestions on how to improve the exposition. The author thanks the anonymous referee for carefully reading the paper and making a number of good suggestions. This work received funding from the European Union (ERC, SATURN, 101076148) and the Deutsche Forschungsgemeinschaft (EXC-2047/1 - 390685813).

\end{document}